\documentclass[12pt]{amsart}
\usepackage{amssymb}
\usepackage{graphicx,algorithm,algpseudocode,color}
\usepackage{csquotes}

\algnewcommand\algorithmicinput{\textbf{Input:}}
\algnewcommand\Input{\item[\algorithmicinput]}
\algnewcommand\algorithmicoutput{\textbf{Output:}}
\algnewcommand\Output{\item[\algorithmicoutput]}

\newtheorem{thm}{Theorem}[section]
\newtheorem{lem}[thm]{Lemma}
\newtheorem{cor}[thm]{Corollary}

\newtheorem{dfn}[thm]{Definition}
\newtheorem{exa}[thm]{Example}

\numberwithin{equation}{section}
\raggedbottom

\newcommand{\B}{\mathbf{B}}
\newcommand{\N}{\mathbb{N}}

\newcommand{\Pol}{\mathrm{Pol}}

\newcommand{\Z}{\mathbb{Z}}
\newcommand{\0}{\mathbf{0}}
\newcommand{\1}{\mathbf{1}}
\newcommand{\st}{\ |\ }
\newcommand{\vin}{\rotatebox[origin=c]{-90}{$\in$}}
\let\bar\overline
\newcommand{\CAB}{\mathcal{C}_{A,\mathbf{B}}}
\newcommand{\C}[2]{\mathcal{C}_{#1,#2}}

\title{On the number of clonoids}
\author{Athena Sparks}
\date{\today}
\address{Department of Mathematics\\
University of Colorado
Boulder\\ USA}
\email{athena.sparks@colorado.edu}
\thanks{This material is based upon work supported by the National Science Foundation under Grant No. DMS 1500254.}
\keywords{clones, polymorphisms, Boolean functions, minors}
\subjclass[2018]{Primary: 08A40; Secondary 06E30}

\begin{document}

\begin{abstract}
A clonoid is a set of finitary functions from a set $A$ to a set $B$ that is closed under taking minors.
Hence clonoids are generalizations of clones. 
By a classical result of Post, there are only countably many clones on a $2$-element set. 
In contrast to that, we present continuum many clonoids for $A = B = \{0,1\}$.
More generally, for any finite set $A$ and any $2$-element algebra $\B$, we give the cardinality of the set of clonoids from $A$ to $\B$ that are closed under the operations of $\B$.
Further, for any finite set $A$ and finite idempotent algebra $\B$ without a cube term (with $|A|,|B|\ge 2$) there are continuum many clonoids from $A$ to $\B$ that are closed under the operations of $\B$; if $\B$ has a cube term there are countably many such clonoids.
\end{abstract}

\maketitle

\section{Introduction}

A clone on a set $D$ is a set of finitary operations on $D$ that contains all projections and is closed under composition of functions (see~\cite[page 97]{Lau} for the definition). 
In particular, clones are closed under the usual manipulations of permuting variables, identifying variables, and introducing dummy variables in functions.
For subsets $A,B$ of $D$, the restriction of a clone on $D$ to the functions from powers of $A$ into $B$ is not a clone anymore.
However, this restriction is still closed under the variable manipulations mentioned above.
More precisely, such a set of functions is closed under minors.
For $k\in\N$, let $[k]:=\{1,\dots,k\}$.

\begin{dfn}
Let $A, B$ be sets, $k\in \N$, and $f\colon A^k\rightarrow B$.
For $\ell\in\N$ and $\sigma:[k]\rightarrow[\ell]$, the function
\[f^\sigma:A^\ell\rightarrow B, (x_1,\dots, x_\ell)\mapsto f(x_{\sigma(1)},\dots, x_{\sigma(k)})\] 
is a \emph{minor} of $f$. 
\end{dfn} 

Sets of functions that are closed under minors have been investigated by Pippenger in~\cite{Pi:GMF}.
He developed a Galois theory for them and sets of pairs of relations that generalizes the classical Galois theory for clones.
These sets reappeared recently when Brakensiek and Guruswami classified Promise Constraint Satisfaction Problems (PCSP) on Boolean, symmetric, self-dual relational structures via polymorphisms between relational structures $\mathbb{A}$ and $\mathbb{B}$ of the same type in~\cite{BG:PCSP}.
Independently, these sets were used by Aichinger and Mayr to investigate equational theories of algebras in~\cite{AM:FGEC}.
Following the notion introduced in that last paper we define:

\begin{dfn}~\cite[Definition 4.1]{AM:FGEC}
Let $A$ be a set and $\B=(B, \mathcal{F})$ an algebra.
For a subset $C$ of $\bigcup_{n\in\N}B^{A^n}$ and $k\in \N$, we let $C_k:=C\cap B^{A^k}$. 
We call $C$ a \emph{clonoid} with \emph{source set} $A$ and \emph{target algebra} $\B$ if 
\begin{enumerate}
    \item $C$ is closed under taking minors, and 
    \item for all $k\in\N$, $C_k$ is a subalgebra of $\B^{A^k}$. 
\end{enumerate}
The set of all clonoids with source $A$ and target algebra $\B$  is denoted $\CAB$.
\end{dfn}

Note that every subset $C$ of $\bigcup_{n\in\N}B^{A^n}$ that is closed under taking minors is a clonoid with target algebra the set $(B, \emptyset)$. 
Further, every clone $C$ on a set $A$ is a clonoid with source set $A$ and target algebra $(A,C)$.
 
It is a well known result of Post that there are only countably many clones on a two element set~\cite[Theorem 3.1.1]{Lau}.
Janov and Mu{\v c}nik showed that there are continuum many clones on any finite set with three or more elements~\cite[Theorem 8.1.3]{Lau}.
In light of these results, one may ask whether the number of clonoids for fixed source $A$ and target $\B$ depends on the size of $A$ and $\B$. 
We will show that there are already continuum many clonoids with source and target of size $2$ (see Corollary~\ref{cor:main}). 
 
We introduce some more notation that will be needed in the following sections.
Let $A$ be a set and $\B=(B,\mathcal{F})$ an algebra.
For a set $F\subseteq\bigcup_{n\in\N}B^{A^n}$, the clonoid with source set $A$ and target algebra $\B$ generated by the functions in $F$ is denoted $\langle F\rangle_{\B}$.
If $\mathcal{F}=\emptyset$, then we simply write $\langle F\rangle$.
Let $P$ and $Q$ be a pair of $m$-ary relations on  $A$ and $B$ respectively. 
A function $f\colon A^k\rightarrow B$ is a \emph{polymorphism} of $(P,Q)$ if $f$ applied component-wise to any $k$-tuple of elements of $P$ is an element of $Q$. 
For a set of pairs of relations $R:=\{(P_i,Q_i)\:: i\in I\}$ 
on $A$ and $B$, the set of functions that are polymorphisms of all pairs of relations in $R$ is denoted $\Pol(R)$.
If $R$ contains only a single pair of relations $(P,Q)$, we write $\Pol(P,Q)$ instead. 
Note that if $f\in\Pol(R)$, then any minor of $f$ is in $\Pol(R)$.  
A clonoid $C$ with source set $A$ and target algebra $\B$ is \emph{finitely related} if there exists a finite set of pairs of finitary relations $R:=\{(P_i,Q_i)\:: 1\le i\le n\}$ on $A$ and $B$ such that $C= \Pol(R)$. 
It can be easily shown that any finitely related clonoid is the polymorphism clonoid of a single relation.
 
Let $X$ be the $k\times (2^k-1)$ matrix with columns $\{x,y\}^k\setminus\{\bar x\}$ where $\bar x=(x,x,\dots, x)$. 
A \emph{$k$-cube term} of $\B$ is a $(2^k-1)$-ary term $c$ in the operations of $\B$ such that 
\[c(X) = \bar{x}\]
for all $x,y$ in $B$ where $c$ is applied to every row of $X$.
A \emph{near-unanimity (NU) term} of $\B$ is an $k$-ary ($k\geq 3$) term $f$ in the operations of $\B$ which satisfies
\[f(y,x,x, \dots,x,x)=f(x,y,x,\dots,x,x)=\dots=f(x,x,x,\dots,x,y)=x\]
for all $x,y\in B$.
A \textit{Mal'cev term} of $\B$ is a ternary term $f$ in the operations of $\B$ which satisfies
\[f(y,y,x)=f(x,y,y)=x\]
for all $x,y\in B$.
Clearly, if $\B$ has an NU-term or a Mal'cev term, then $\B$ has a cube term. 
 
The following main result of this paper gives more precise information about the cardinality of clonoids with target algebras of size $2$.

\begin{thm}\label{thm:main}
 Let $\CAB$ denote the set of all clonoids with finite source $A$ ($|A|>1$) and target algebra $\B$ of size $2$. Then
\begin{enumerate}
\item \label{it:NU}
  $\CAB$ is finite iff $\B$ has an NU-term;
\item \label{it:Malcev}
  $\CAB$ is countably infinite iff $\B$ has a Mal'cev term but no majority term;
\item \label{it:set}
  $\CAB$ has size continuum iff $\B$ has neither an NU-term nor a Mal'cev term.  
\end{enumerate}
Moreover, in cases~\eqref{it:NU} and~\eqref{it:Malcev} all clonoids in $\CAB$ are finitely related.  
\end{thm}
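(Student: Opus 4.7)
The plan is to split the argument into the three cases of the trichotomy.  Since every $2$-element algebra $\B$ falls into exactly one of the three categories (``has an NU-term'', ``has a Mal'cev term but no majority term'', ``has neither''), it suffices to prove the three forward implications; the ``iff''s then follow by elimination.  The finite-relatedness assertions in cases~\eqref{it:NU} and~\eqref{it:Malcev} should emerge from the same structural arguments that yield the cardinality upper bounds.

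For case~\eqref{it:NU} the approach is to prove a Baker--Pixley analogue for clonoids: if $\B$ has an NU-term of arity $n$, then every $C\in\CAB$ equals $\Pol(\mathcal{R})$, where $\mathcal{R}$ consists of those pairs $(P,Q)$ with $P\subseteq A^m$, $Q\subseteq B^m$, $m<n$, that $C$ preserves.  The classical interpolation proof adapts: given $f\in\Pol(\mathcal{R})$ of arity $k$, for any tuple of inputs one interpolates $f$ on each $(n-1)$-subset of its graph by a function of $C$, then combines the local pieces via the NU-term to land back in $C$.  Because $|A|$ and $|B|$ are finite, there are only finitely many candidate pairs, so $\CAB$ is finite and each clonoid is finitely related.

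For case~\eqref{it:Malcev}, Post's classification forces the term clone of $\B$ to contain the affine operation $x+y+z\pmod 2$, so every $C_k$ is an $\mathbb{F}_2$-subspace of $B^{A^k}$.  For the upper bound I would lift a Bulatov-type theorem: with a Mal'cev term every subpower admits a compact generating description, from which one can argue each clonoid is finitely related, whence $\CAB$ is countable since only countably many finite relation pairs exist.  For the lower bound I would exhibit an explicit infinite family of distinct clonoids, for instance a strictly ascending chain obtained by incrementally adjoining parity-indicator functions of unbounded arity and verifying that each new generator escapes the clonoid generated by its predecessors.

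For case~\eqref{it:set} the upper bound is automatic since $|\CAB|\le 2^{\aleph_0}$.  The lower bound is supplied by Corollary~\ref{cor:main}, which produces continuum many clonoids for $A=B=\{0,1\}$ with empty $\mathcal{F}$; to extend to arbitrary finite $A$ one can embed $\{0,1\}^k\hookrightarrow A^k$ coordinatewise and transport the family.  The main obstacles I anticipate are the upper bound in case~\eqref{it:Malcev}, where finite relatedness must be squeezed out of the affine structure despite the absence of the bounded-arity interpolation provided by NU-terms, and the upgrade of case~\eqref{it:set} to nontrivial target algebras: the continuum family must survive closure under $\B$'s operations, which should be feasible because without a cube term those operations are structurally restricted (essentially semilattice-like by Post) and cannot collapse the combinatorial invariants distinguishing the constructed clonoids.
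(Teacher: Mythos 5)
Your overall architecture matches the paper's: prove the three reverse implications and get the forward ones by mutual exclusivity, use a Baker--Pixley interpolation argument for the NU case (the paper's Theorem~\ref{thm:NU} reduces membership in $C$ to membership of all $|A|^{n-1}$-ary minors, which is your ``determined by low-arity pairs'' statement in different bookkeeping), and for the Mal'cev lower bound build a strictly ascending chain of clonoids generated by indicator functions of unbounded arity, separated by a parity-of-support invariant (exactly Lemma~\ref{lem:ek}). However, two steps in your plan are genuinely incomplete.

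First, the countable upper bound in case~\eqref{it:Malcev}. You assert that a ``Bulatov-type theorem'' giving compact descriptions of subpowers will yield that every clonoid is finitely related, but you never say how. This is the nontrivial step: the paper gets it (Theorem~\ref{thm:cube}) by combining Pippenger's Galois connection $C=\Pol(\mathrm{Inv}(C))$ with the descending chain condition for $\CAB$, which Aichinger and Mayr prove for target algebras with a cube term. Some such DCC or finite-generation result is indispensable here; ``each $C_k$ is an $\mathbb{F}_2$-subspace'' does not by itself bound the relational description of $C$ across all arities, and your sketch as written does not close this gap.

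Second, and more seriously, the continuum lower bound in case~\eqref{it:set}. You propose to import the continuum family from Corollary~\ref{cor:main}, but in the paper that corollary is a \emph{consequence} of Theorem~\ref{thm:main}~\eqref{it:set}, and in any event having continuum many minor-closed classes for the target $(\{0,1\},\emptyset)$ does not give continuum many clonoids for a nontrivial $\B$: closing each class under the operations of $\B$ could collapse the family. You acknowledge this (``must survive closure'') but wave it away on the grounds that the operations are ``essentially semilattice-like,'' which is false --- the relevant maximal Boolean clones without an NU- or Mal'cev term are, up to duality, those generated by $\{\wedge,\0,\1\}$, $\{\neg,\0\}$, and $\{\not\rightarrow\}$, and the last two are not semilattice-like. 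The actual work of the paper's Section~\ref{sec:nocube} is an explicit family $F_U=\{f_k : k\in U\}$ together with witnessing relation pairs $(P_n,Q_n)$ such that $f_k$ preserves $(P_n,Q_n)$ iff $k\ne n$, plus a separate verification for each of the three clones (Lemmas~\ref{lem:and_case}, \ref{lem:neg_case}, \ref{lem:notimplies_case}) that $\langle F_U\rangle_\B$ still fails to preserve $(P_n,Q_n)$ exactly for $n\in U$. That construction and case analysis is the missing content of your proposal.
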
  

Following the case distinction of the theorem, we consider the size of $\CAB$ for an arbitrary finite $\B$ with an NU-term in Section~\ref{sec:NU}, for $\B$ with a cube term in Section~\ref{sec:cube}, and for $\B$ without cube term in Section~\ref{sec:nocube}.
In Section~\ref{sec:proof} we combine the results from these sections to prove Theorem~\ref{thm:main}.
The backward direction of (1) and the forward direction of (3) hold for arbitrary finite algebras $\B$ of size at least 2; our proofs of the others require that $\B$ is Boolean.
It is unknown if the forward direction of (1) holds for arbitrary finite algebras $\B$ of size at least 2, however, we know that the backward direction of (3) does not. 
An example of a target algebra $\B$ that has neither an NU-term nor a Mal'cev term where $\CAB$ is countably infinite is given in Example~\ref{exa:InfnotNUMalcev}. 
This example also shows that (2) does not hold for arbitrary target algebras. 
It is not known if there exists $\B$ with a Mal'cev term but no NU-term where $\CAB$ is finite.

The following theorem addresses the size of $\CAB$ for a finite idempotent algebra $\B$. 

\begin{thm}\label{thm:main2}
Let $A$ be a finite set and $\B$ a finite idempotent algebra with $|A|,|B|>1$. Then $\CAB$ has size continuum iff $\B$ has no cube term. 
\end{thm}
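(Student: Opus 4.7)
The plan is to derive Theorem~\ref{thm:main2} directly from the two section-level dichotomy results to be established later in the paper, namely the cube-term results of Sections~\ref{sec:cube} and~\ref{sec:nocube}.

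First I will handle the direction that $\B$ having a cube term implies $|\CAB| \le \aleph_0$. Here I invoke the main result of Section~\ref{sec:cube}, which asserts via a clonoid analogue of the Baker--Pixley / Bulatov--Jeavons theorem that every clonoid in $\CAB$ is finitely related whenever the target algebra $\B$ admits a cube term. Since $A$ and $B$ are finite, there are only countably many finite pairs of relations on $A$ and $B$, and therefore at most countably many polymorphism clonoids. This rules out the continuum and gives one implication.

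For the opposite direction, suppose $\B$ has no cube term. The crucial structural input is the cube term blocker theorem of Markovi\'c, Mar\'oti, and McKenzie: every finite idempotent algebra without a cube term admits a pair of subuniverses $C \subsetneq D \subseteq B$ with the property that, for every term operation $t$ of $\B$, whenever at least one argument of $t$ is drawn from $C$ (and the rest from $D$), the output lies in $C$. I then appeal to the construction of Section~\ref{sec:nocube}, which builds, for each $S \subseteq \N$, a distinct clonoid $\langle F_S \rangle_{\B}$ with generator set $F_S = \{f_n : n \in S\}$, where the $f_n \colon A^{k_n} \to B$ are minor-incomparable witnesses respecting the operations of $\B$; the blocker pair $(C,D)$ furnishes a polymorphism relation that separates $\langle F_S \rangle_{\B}$ from $\langle F_{S'} \rangle_{\B}$ for distinct $S, S'$. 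This yields $2^{\aleph_0}$ many clonoids, matching the trivial upper bound $|\CAB| \le 2^{\aleph_0}$.

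The step I expect to be the main obstacle is the blocker-based continuum construction itself: each family $F_S$ must simultaneously (i) generate a bona fide clonoid, i.e., the subalgebra condition of the clonoid definition must be preserved under the operations of $\B$; (ii) consist of generators that are pairwise minor-incomparable; and (iii) be separated from the other $F_{S'}$ by a polymorphism relation derived from $(C,D)$. In the Boolean setting of Theorem~\ref{thm:main} the blocker is essentially canonical and the witnesses admit explicit descriptions; in the general idempotent case a more delicate construction of $f_n$ is required to accommodate an arbitrary blocker and an arbitrary term signature for $\B$.
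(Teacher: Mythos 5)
Your proposal matches the paper's proof exactly: Theorem~\ref{thm:main2} is derived there in one line from Theorem~\ref{thm:cube} (a cube term implies every clonoid in $\CAB$ is finitely related, hence there are at most countably many) and Lemma~\ref{lem:idempotent} (the cube term blocker of Markovi\'c--Mar\'oti--McKenzie yields the continuum-sized family $\langle F_U\rangle_{\B}$, $U\subseteq\N$). The only cosmetic difference is that the paper proves Theorem~\ref{thm:cube} via the descending chain condition on clonoids from Aichinger--Mayr rather than a Baker--Pixley-type interpolation argument, but that does not affect the deduction of Theorem~\ref{thm:main2}.
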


This follows immediately from Theorem~\ref{thm:cube} and Lemma~\ref{lem:idempotent}. 
It is unknown if this holds for arbitrary finite target algebras of size at least two. 

Given these results, the following example gives a target algebra $\B$ that has neither an NU-term nor a Mal'cev term where $\CAB$ is countably infinite.

\begin{exa}\label{exa:InfnotNUMalcev}
Let $A$ be a finite set and $\B_1$ and $\B_2$ be algebras of size 2 and type (2,3). 
The binary operation $t$ is interpreted in $\B_1$ as the projection onto to first coordinate and in $\B_2$ as the projection onto the second coordinate. 
The ternary operation $s$ in $\B_1$ is the Mal'cev operation $x-y+z\pmod 2$ and $s$ is the ternary majority operation in $\B_2$. 
Because of $t$, we see that $\B_1$ and $\B_2$ are independent \cite[Lemma 2.1]{F:TIOAUSF}; that is, the term operations of $\B_1\times\B_2$ are exactly the functions of the form
\begin{align*}
(B_1\times B_2)^k&\rightarrow B_1\times B_2\\
 ((x_1,y_1),\dots,(x_k,y_k))&\mapsto(g(x_1,\dots, x_k), h(y_1,\dots, y_k))
\end{align*}
for $k\in\B$ and $g,h$ arbitrary term functions of $\B_1$, $\B_2$, respectively. 
In particular, $\B_1\times \B_2$ is an idempotent algebra that has a 3-cube term but neither an NU-term nor a Mal'cev term.

By Theorem~\ref{thm:main} (2), there are countably infinitely many clonoids with target algebra $\B_1$.
Each clonoid $C$ in $\C{A}{\B_1}$ can be identified with a clonoid $\hat{C}$ in $\C{A}{\B_1\times\B_2}$ where 
\[\hat{C}:=\{\hat{f}:A^k\rightarrow (B_1\times B_2),\, (x_1,\dots, x_k)\mapsto (f(x_1,\dots, x_k),0)\st f\in C_k\}.\]
Hence there are infinitely many clonoids with source $A$ and target algebra $\B_1\times\B_2$. 
Therefore, by Theorem~\ref{thm:main2}, the number of clonoids with source $A$ and target $\B$ is countably infinite. 
\end{exa}
 
Pippenger showed that there are continuum many clonoids where the target algebra is the set $\{0,1\}$ with no operations \cite[Proposition 3.4 and following discussion]{Pi:GMF}. 
Theorem~\ref{thm:main}~\eqref{it:set} gives a alternate proof to this result. 
Since each clonoid with a target $\{0,1\}$ is also a clonoid with target $\{0,\dots, n\}$ for any $n\ge 1$, we immediately have the following:
\begin{cor}\label{cor:main}
 For all $m,n\ge 1$, there are continuum many clonoids with source $\{0,\dots,m\}$ and target $\{0,\dots,n\}$. 
\end{cor}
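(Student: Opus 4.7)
The plan is to reduce the corollary directly to the ``neither NU-term nor Mal'cev term'' case of Theorem~\ref{thm:main}, applied to the two-element target with empty signature. Fix $m,n\ge 1$, set $A=\{0,\dots,m\}$ (so $|A|=m+1\ge 2$), and let $\mathbf{B}_0=(\{0,1\},\emptyset)$. Since $\mathbf{B}_0$ has no fundamental operations, every term operation of $\mathbf{B}_0$ is a projection, so $\mathbf{B}_0$ has no near-unanimity term and no Mal'cev term. Theorem~\ref{thm:main}~\eqref{it:set} then gives that $\mathcal{C}_{A,\mathbf{B}_0}$ has size continuum.

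To pass from target $\{0,1\}$ to target $\{0,\dots,n\}$, I use the trivial inclusion highlighted in the sentence preceding the corollary. Every $C\in\mathcal{C}_{A,\mathbf{B}_0}$ is a subset of $\bigcup_{k\in\N}\{0,1\}^{A^k}\subseteq\bigcup_{k\in\N}\{0,\dots,n\}^{A^k}$ that is closed under taking minors. Setting $\mathbf{B}_1:=(\{0,\dots,n\},\emptyset)$, the subalgebra condition of the clonoid definition is vacuous since $\mathbf{B}_1$ has no operations, so $C$ is equally a member of $\C{A}{\mathbf{B}_1}$. The assignment $C\mapsto C$ is manifestly injective, as distinct subsets of the smaller function set remain distinct inside the larger one. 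Hence $|\C{A}{\mathbf{B}_1}|$ is at least continuum, and the matching upper bound is automatic because $\bigcup_{k\in\N}\{0,\dots,n\}^{A^k}$ is countable and so has only continuum many subsets.

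There is no real obstacle once Theorem~\ref{thm:main} has been established; the entire content of the corollary is the observation that the continuum-size lower bound for the empty-signature Boolean case propagates upward to any larger finite source and target. I would expect the write-up to be no more than a few lines, consisting of the choice of $\mathbf{B}_0$, the verification that it has neither of the forbidden terms, and the monotonicity argument above.
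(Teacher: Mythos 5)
Your proposal is correct and matches the paper's own (very brief) argument: the paper likewise obtains continuum many clonoids with target $(\{0,1\},\emptyset)$ from Theorem~\ref{thm:main}~\eqref{it:set} (noting in Section~\ref{sec:nocube} that the empty-signature case is covered) and then uses exactly the inclusion stated in the sentence preceding the corollary to pass to the larger target $\{0,\dots,n\}$. No issues.
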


\section{NU-terms}\label{sec:NU}

In this section, we will show that there are only finitely many clonoids with a finite source $A$ and algebra $\B$ with an NU-term.
In particular, we show that each such clonoid is the polymorphism clonoid of a single pair of relations on $A$ and $B$.
We identify $A^{A^{n}}$ with $A^{|A|^{n}}$ and let $\Pi_A^{|A|^{n}}$ be the set of all $|A|^{n}$-ary projections on $A$.

\begin{thm}\label{thm:NU}
Let $A$ be a finite set of size greater than 1 and $\B$ a finite algebra with $n$-ary NU-term ($n\geq 3$).
Let $C$ be a clonoid with source $A$ and target $\B$.
Then $C = \Pol(\Pi_A^{|A|^{n-1}},C_{|A|^{n-1}})$.
Hence there are only finitely many such clonoids with source $A$ and target $\B$. 
\end{thm}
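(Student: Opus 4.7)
The plan is to prove the equality $C = \Pol(\Pi_A^{|A|^{n-1}}, C_{|A|^{n-1}})$ by two inclusions, and then conclude finiteness by bounding the number of possible $|A|^{n-1}$-ary parts. First I will unpack the polymorphism condition: a function $f \colon A^k \to B$ lies in the right-hand side exactly when, for every $\sigma \colon [k] \to [|A|^{n-1}]$, the minor $f^\sigma$ belongs to $C_{|A|^{n-1}}$. The inclusion $C \subseteq \Pol(\Pi_A^{|A|^{n-1}}, C_{|A|^{n-1}})$ is then immediate from closure of $C$ under minors.

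The substantive direction will rest on the Baker--Pixley theorem, the standard consequence of having an $n$-ary NU-term: every subalgebra of a finite direct power of $\B$ is determined by its projections onto $(n-1)$-element subsets of coordinates. Applying this to the subalgebra $C_k \le \B^{A^k}$, the problem reduces to showing that, for any $f$ in the polymorphism clonoid on the right and any $(n-1)$-subset $I = \{a^{(1)},\dots,a^{(n-1)}\} \subseteq A^k$, there exists $g \in C_k$ with $g(a^{(j)}) = f(a^{(j)})$ for $j = 1,\dots,n-1$.

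To exhibit such a $g$, I will form the $k \times (n-1)$ matrix $M$ whose $j$-th column is $a^{(j)}$. Its rows are elements of $A^{n-1}$, so after fixing a bijection $A^{n-1} \cong [|A|^{n-1}]$ the row-type map yields $\sigma \colon [k] \to [|A|^{n-1}]$; by hypothesis $f^\sigma \in C_{|A|^{n-1}}$. I then choose any $\tau \colon [|A|^{n-1}] \to [k]$ that is a right inverse of $\sigma$ on the image of $\sigma$ (arbitrary elsewhere), and set $g := (f^\sigma)^\tau$. Closure of $C$ under minors gives $g \in C_k$, and since for every $i\in[k]$ the row $\tau(\sigma(i))$ of $M$ coincides with row $i$, a short unwinding yields $g(a^{(j)}) = f(a^{(j)})$ for each $j$. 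Baker--Pixley then delivers $f \in C_k$.

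Finiteness follows at once: every clonoid of the prescribed form is determined by the datum $C_{|A|^{n-1}}$, which is a subalgebra of the finite algebra $\B^{A^{|A|^{n-1}}}$, so only finitely many choices arise. The main step to execute carefully is the bookkeeping of the two identifications between $A^{n-1}$ and $[|A|^{n-1}]$ used to define $\sigma$ and $\tau$; the appeals to Baker--Pixley and to minor-closure of $C$ are routine. I should also note that the ambient arity $|A|^{n-1}$ comfortably accommodates $n-1$ columns, since $|A|\ge 2$ and $n\ge 3$ ensure $n-1 \le |A|^{n-1}$.
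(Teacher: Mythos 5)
Your proof is correct and follows essentially the same route as the paper's: unpack the polymorphism condition as ``all $|A|^{n-1}$-ary minors lie in $C$,'' invoke Baker--Pixley to reduce membership in $C_k$ to interpolation on $(n-1)$-element subsets of $A^k$, and realize the interpolating function via the row-type map $\sigma$ into $A^{n-1}$. The only difference is that you make explicit the final step --- constructing $g=(f^\sigma)^\tau\in C_k$ with a partial right inverse $\tau$ of $\sigma$ --- which the paper leaves implicit after establishing $f(X)=f^\sigma(Z)$.
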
  
\begin{proof}
Let $f\colon A^k\to B$. 
We claim that  
\begin{equation} \label{eq:fCA2}
 f\in C \text{ iff all } |A|^{n-1}\text{-ary minors of } f \text{ are in } C.
\end{equation}
This is equivalent to $C = \Pol(\Pi_A^{|A|^{n-1}},C_{|A|^{n-1}})$.

The forward direction of \eqref{eq:fCA2} is immediate from the definition of clonoids. 
For the reverse direction, note that the $k$-ary functions in $C$ form a subalgebra $C_k$ of $\B^{A^k}$.
By the Baker-Pixley Theorem~\cite{BP:PICR}, $C_k$ is uniquely determined by its projections onto the subsets of $A^k$ with $n-1$ or fewer elements. 
More precisely,
\begin{equation} \label{eq:BP}
 f\in C_k \text{ iff } \forall I\subseteq A^k\text{ with }|I|\le n-1,\; \exists g\in C_k\text{ so that } f|_I = g|_I.
\end{equation}
 
Let $Z$ be a matrix with $n-1$ rows whose columns are the $|A|^{n-1}$ tuples of $A^{n-1}$ in some order. 
For fixed $x_1,\dots,x_{n-1}\in A^k$, let $X$ denote the matrix with rows $x_1,\dots,x_{n-1}$ and $k$ columns.
Let $\sigma\colon [k]\to [|A|^{n-1}]$ such that the $i$-th column of $X$ is equal to the $\sigma(i)$-th column of $Z$.
With functions acting on the rows of the corresponding matrices, we then have  
\begin{equation} \label{eq:minor}
 f(X) = f^{\sigma}(Z).
\end{equation}
With~\eqref{eq:BP} and~\eqref{eq:minor} it follows that $f\in C_k$.
Thus~\eqref{eq:fCA2} and the theorem are proved.
\end{proof}

\section{Cube term}\label{sec:cube}

In this section, we will show that all clonoids with a finite source and a target algebra with a cube term, in particular, with a Mal'cev term, are finitely related.
We will also construct infinitely many clonoids for a fixed algebra of size 2 with a Mal'cev term.

\begin{thm}\label{thm:cube}
Let $A$ be a finite set and $\B$ a finite algebra with cube term. Then  each clonoid $C$ with source $A$ and target $\B$ is finitely related. Hence there are at most countably many such clonoids.
\end{thm}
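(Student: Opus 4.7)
The plan is to apply the few subpowers theorem of Berman, Idziak, Marković, McKenzie, Valeriote, and Willard: an algebra $\B$ has a cube term if and only if there exists a polynomial $p$ such that, for every $n$, every subalgebra of $\B^n$ is generated by at most $p(n)$ elements. This polynomial generation bound will serve as a substitute for the ``projection-determined'' property supplied by Baker--Pixley in the NU-term setting of Section~\ref{sec:NU}.

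Given $C \in \CAB$, for each $m \ge 1$ I would form the relation
\[
Q_m := \bigl\{(f(u_1),\ldots,f(u_m)) \st k \in \N,\ f \in C_k,\ u_1,\ldots,u_m \in A^k\bigr\} \subseteq B^m.
\]
Because every $C_k$ is closed under the operations of $\B$ applied component-wise, $Q_m$ is a subalgebra of $\B^m$ and hence has a generating set of size at most $p(m)$. The inclusion $C \subseteq \Pol(A^m,Q_m)$ is immediate from the definition of $Q_m$. The heart of the argument is to find a single value $m_0$, depending only on $\B$ and $|A|$, for which the reverse inclusion $\Pol(A^{m_0},Q_{m_0}) \subseteq C$ also holds; then the single pair $(A^{m_0},Q_{m_0})$ witnesses finite relatedness of $C$.

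To get the reverse inclusion, I would use the few-subpowers bound to replace $Q_{m_0}$ by a polynomially sized generating set and then invoke the cube term to close that finite data back to the whole clonoid. Each generator of $Q_m$ is of the form $(f(u_1),\ldots,f(u_m))$ for a single $f \in C$, so bounded generation of $Q_m$ in fact encodes only boundedly many functions from $C$. Choosing $m_0$ large enough that the collection of tuples $(f'(u_1),\ldots,f'(u_{m_0}))$, as the $u_i$ range over all of $A^{k'}$, pins down $f'$ uniquely, one should be able to write $f'$ as a $\B$-term combination of minors of the generating functions and thus place $f'$ in $C$.

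The main obstacle is exactly this reconstruction step: one must choose $m_0$ \emph{uniformly} in the arity $k'$ of the candidate function, and one must show that the cube term suffices to promote the resulting pointwise identities in $\B^{m_0}$ to actual clonoid identities. In the NU-term case this is clean because Baker--Pixley yields a hard uniform bound on the size of coordinate sets determining a subpower; under a cube term the corresponding compact-representation machinery from the few subpowers paper is softer and requires more bookkeeping, but carries the same local-to-global flavour. Once finite relatedness is established, the countability of $\CAB$ is immediate, since there are only countably many finite pairs of finitary relations on the finite sets $A$ and $B$, hence at most countably many polymorphism clonoids.
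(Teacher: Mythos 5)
There is a genuine gap here, and in fact the approach as you have set it up cannot succeed. Your plan hinges on finding a single arity $m_0$, \emph{depending only on $\B$ and $|A|$}, such that $\Pol(A^{m_0},Q_{m_0})\subseteq C$ for every clonoid $C$. If such a uniform $m_0$ existed, every clonoid in $\CAB$ would be the polymorphism clonoid of a pair of relations of arity $m_0$; since there are only finitely many such pairs, $\CAB$ would be \emph{finite}. But Lemma~\ref{lem:ek} exhibits an infinite strictly ascending chain $\langle e_1\rangle_\B\subset\langle e_2\rangle_\B\subset\cdots$ of clonoids with target $\B=(\{0,1\},+,\0,\1)$, which has a Mal'cev and hence a cube term. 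So no uniform $m_0$ exists: the arity of a defining relation must be allowed to grow with the clonoid, and the few-subpowers generation bound $p(m)$ gives you no handle on how to choose it. You correctly identify the uniform choice of $m_0$ as "the main obstacle," but it is not an obstacle that more bookkeeping can overcome; the reconstruction step you defer (writing an arbitrary $f'\in\Pol(A^{m_0},Q_{m_0})$ as a $\B$-term combination of minors of the generators) is the entire content of the theorem and is nowhere carried out. A smaller issue: your justification that $Q_m$ is a subalgebra of $\B^m$ uses only that each $C_k$ is closed under the operations of $\B$ componentwise; to combine tuples arising from functions of different arities and from different evaluation points you also need closure of $C$ under minors.

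The paper's proof avoids all of this with a soft argument. By Pippenger's Galois connection, $C=\Pol(\mathrm{Inv}(C))$, so enumerating $\mathrm{Inv}(C)$ yields a descending chain $C_1\supseteq C_2\supseteq\cdots$ of finitely related clonoids whose intersection is $C$. Theorem 5.3 of Aichinger and Mayr shows that $\CAB$ satisfies the descending chain condition when $\B$ has a cube term, so the chain stabilizes at some $C_m=C$, and $C$ is finitely related with no uniform arity bound claimed or needed. If you want to salvage a direct compact-representation argument, the arity of the defining relation must depend on $C$, and you would still need to supply the local-to-global step that you currently only gesture at.
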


\begin{proof}
Let $\text{Inv}(C)$ denote the set of all relational pairs on $A$ and $B$ preserved by $C$.
Let $\{(P_i,Q_i)\:: i\in \N\}$ be an enumeration of $\text{Inv}(C)$.
Then $C=\Pol(\text{Inv}(C))$ by the Galois Connection given in~\cite{Pi:GMF}.
Define $C_j:=\Pol(\{(P_i,Q_i)\::i\le j\})$.
Then $C_1 \supseteq C_2 \supseteq\cdots$ is a descending chain and 
\begin{equation} \label{eq:chain} \bigcap_{j\in\N}C_j=C.
\end{equation}
By Theorem 5.3 in \cite{AM:FGEC}, $\CAB$ satisfies the DCC. 
Hence there exists $m\in\N$ such that $C_m=C_n$ for all $n\ge m$.
By~\eqref{eq:chain}, $C=C_m$ and $C$ is finitely related. 
\end{proof}

Next we show that there actually are infinitely many clonoids with target any Mal'cev algebra of size $2$ without an NU-term.
By Post's classification of Boolean clones, the clone of each such algebra is contained in the clone of $ (\{0,1\},+,\0,\1)$, where $\0,\1$ are the unary constant functions. 
For algebras $\B$ and $\B'$, if the clone of $\B'$ is contained in the clone of $\B$, then $\CAB\subseteq\C{A}{\B'}$ for any set $A$. 
So it suffices to show the following:

\begin{lem}\label{lem:ek}
There exists infinitely many clonoids with source $A$ of size at least two and target algebra $\B= (\{0,1\},+,\0,\1)$.
\end{lem}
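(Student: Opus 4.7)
The plan is to exhibit an infinite strictly ascending chain $D_1\subsetneq D_2\subsetneq\cdots$ of clonoids with source $A$ and target $\B$, following the Reed--Muller template of stratifying functions by their ``polynomial degree'' in the affine $\mathbb{F}_2$-clone. Fix two distinct elements $a,b\in A$. For each $n\ge 1$ and $S\subseteq[n]$ I would define $e_S\colon A^n\to\{0,1\}$ by declaring $e_S(y_1,\ldots,y_n)=1$ iff $y_i=b$ for every $i\in S$ (with $e_\emptyset\equiv\1$), and then for each $k\ge 1$ set
\[
(D_k)_n := \mathbb{F}_2\text{-span of }\{e_S : S\subseteq[n],\ |S|\le k\},\qquad D_k := \bigcup_{n\ge 1}(D_k)_n.
\]

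To check that each $D_k$ is a clonoid, observe that $(D_k)_n$ is an $\mathbb{F}_2$-subspace of $\mathbb{F}_2^{A^n}$ containing $\1=e_\emptyset$ and $\0$, so it is a subalgebra of $\B^{A^n}$. For closure under minors, a direct calculation gives $e_S^\sigma = e_{\sigma(S)}$ for any $\sigma\colon[n]\to[m]$, and since $|\sigma(S)|\le|S|\le k$ each minor of a spanning function is again a spanning function of $(D_k)_m$; the full subspace then maps into $(D_k)_m$ by linearity of the minor operation.

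To see that the chain is strictly ascending, I would show $e_{[k+1]}\notin(D_k)_{k+1}$. The crux is the $\mathbb{F}_2$-linear independence of $\{e_S:S\subseteq[n]\}$ in $\mathbb{F}_2^{A^n}$: evaluating at $y^T\in A^n$ with $y^T_i=b$ for $i\in T$ and $y^T_i=a$ otherwise gives $e_S(y^T)=1$ precisely when $S\subseteq T$, so any vanishing relation $\sum_S c_S e_S=0$ yields $\sum_{S\subseteq T}c_S=0$ for every $T$, and M\"obius inversion over the Boolean lattice forces every $c_S=0$. In particular $e_{[k+1]}$ cannot be written as a linear combination of $\{e_S:|S|\le k\}$, so it lies in $(D_{k+1})_{k+1}\setminus(D_k)_{k+1}$, producing infinitely many distinct clonoids. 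The degree stratification is the main idea---available precisely because $\B$ contributes $\mathbb{F}_2$-linear combinations and the constants---and I do not anticipate any real obstacle beyond the product formula for minors and the standard inclusion-exclusion argument for independence.
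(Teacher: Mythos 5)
Your proof is correct, and your clonoids are in fact the same ones the paper uses: with $b=1$ your $e_{[n]}$ is the paper's $e_n$ (the indicator of the all-ones tuple), the general $e_S$ is a minor of $e_{|S|}$, and your span $D_k$ is exactly the generated clonoid $\langle e_k\rangle_{\B}$ from Lemma~\ref{lem:ek}. The only genuine difference is the separation argument. The paper shows $e_k\notin\langle e_{k-1}\rangle_{\B}$ by a parity count: every minor $e_i^{\sigma}$ with $i<k$ has support of size $2^{k-|\sigma([i])|}$, which is even, so any $\mathbb{F}_2$-combination of such minors (and of $\1$) has even support, while $e_k$ has support of size $1$. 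You instead prove full $\mathbb{F}_2$-linear independence of $\{e_S : S\subseteq[n]\}$ by evaluating at the points $y^T$ and inverting the resulting triangular (zeta) system over the Boolean lattice. Your route proves slightly more (independence of the whole family, hence the exact dimension of each $(D_k)_n$, not just strictness of the chain) and makes the subalgebra and minor-closure checks transparent by exhibiting $D_k$ as an explicit span rather than as a generated clonoid; the paper's parity trick is shorter but yields only the single non-membership needed. Both are complete proofs of the lemma.
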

\begin{proof}
Let $0,1\in A$ and for $k\in\N$ define
\[ e_k\colon A^k\to \{0,1\}, x\mapsto\begin{cases} 1 & \text{if } x = (1,\dots,1), \\ 0 & \text{else.} \end{cases} \]
We will show that 
 
\[ \langle e_1 \rangle_\B \subset \langle e_2 \rangle_\B \subset \dots \]
is an infinite ascending chain of clonoids with target $\B$.
The idea for this example was used by Bulatov in~\cite{Bu:NFM} to construct countably many expansions of $(\Z_4,+)$.

It is enough to show that 
\begin{equation}\label{eq:ek}
e_k\ne\sum_{i=1}^{k-1}a_ie_i^{\sigma_i} \text{ for any }a_i\in\{0,1\}\text{ and }\sigma_i\colon[i]\rightarrow[k].
\end{equation}
For any $i<k$ and $\sigma_i\colon[i]\rightarrow[k]$, let the support of $e_i^{\sigma_i}$ be $\{x\in \{0,1\}^k\::e_i^{\sigma_i}(x)=1\}$.
Note that the support of $e_i^{\sigma_i}$  has even size for any $i<k$ and $\sigma_i\colon[i]\rightarrow[k]$.
Hence $\sum_{i=1}^{k-1}a_ie_i^{\sigma_i}$ has support of even size for all $a_i,\sigma_i$.
Since the support of $e_k$ is odd, \eqref{eq:ek} follows immediately.  
\end{proof}

\section{Without cube term}\label{sec:nocube}

In this section, we will show that there are continuum many clonoids with a finite source and finite idempotent target algebra without a cube term. 
Additionally, we will show there are continuum many clonoids with a finite source and Boolean target algebra without a cube term, or equivalently without an NU-term or Mal'cev term.

Let $A=\{0,1,\dots,d\}$ and $B=\{0,1,\dots,e\}$ for $d,e\geq 1$.
Define the following $n$-ary relations on $A$ and $B$, respectively, for all $n\in\N$:
\begin{align*}
P_n&:=\{(1,0,\dots,0),(0,1,0,\dots,0),\dots,(0,\dots,0,1)\}\subseteq A^n,\\
Q_n&:=\{0,1\}^n\setminus \{(1,\dots,1)\}\subseteq B^n.
\end{align*}
For $U\subseteq\N$, let $R_{U}:=\{(P_n,Q_n)\:: n\in U\}$.
Note that $\0$ preserves $R_{U}$ for any $U\subseteq\N$.

Define the following $k$-ary functions for all $k\in\N$:
\[f_k\colon A^k\rightarrow \{0,1\},x\mapsto 
  \left\{
    \begin{array}{ll}
      1 & \text{ if } x\in P_k,\\
      0 & \text{ otherwise}.
    \end{array}
  \right.\] 
For $U\subseteq\N$, let $F_U:=\{f_k\:: k\in U\}$.

We show some connections between these functions and relations that we need later.

\begin{lem} \label{lem:fkPl} \mbox{}
\begin{enumerate}
\item \label{it:fkPl}
 Let $k,n\in\N$.
 Then $f_k$ preserves $(P_n,Q_n)$ iff $k\neq n$.
\item \label{it:FU0}
 $\langle F_U \rangle \subseteq \Pol(R_{\bar{U}})$ for each $U\subseteq\N$ where $\bar{U}$ is the complement of $U$.
\end{enumerate}
\end{lem}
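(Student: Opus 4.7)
The plan is to prove (1) by a direct counting argument on the number of ones appearing in a matrix whose rows are drawn from $P_n$, and then to derive (2) from (1) together with the observation (already noted in the introduction) that $\Pol(R)$ is closed under taking minors.

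For (1), I fix $k,n\in\N$ and consider an arbitrary $k$-tuple of elements $v_1,\dots,v_k\in P_n$, arranged as the rows of a $k\times n$ matrix $M$ with entries in $\{0,1\}$. Applying $f_k$ componentwise to the columns produces a tuple in $\{0,1\}^n$, so the only way it fails to lie in $Q_n$ is if it equals $(1,\dots,1)$. By definition of $f_k$, this occurs precisely when every column of $M$ lies in $P_k$, i.e.\ when every column has exactly one $1$. For the forward direction, I assume $k=n$ and exhibit a concrete witness to non-preservation: take the rows $v_i$ to be the $i$-th element of $P_n$, so that $M$ is the identity matrix; then each column is a standard basis vector of $A^k$, hence in $P_k$, and $f_k$ applied componentwise yields $(1,\dots,1)\notin Q_n$.

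For the reverse direction, suppose $k\neq n$ and let $M$ be as above. Each row has exactly one $1$, so $M$ contains exactly $k$ ones in total. If every column of $M$ belonged to $P_k$, the total number of ones would be exactly $n$, forcing $k=n$, contradiction. Hence at least one column is not in $P_k$, and $f_k$ sends that column to $0$, so the output tuple has at least one $0$-coordinate and lies in $Q_n$.

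For (2), by part (1), for every $k\in U$ and every $n\in\bar U$ we have $k\neq n$, so $f_k$ preserves $(P_n,Q_n)$; hence $F_U\subseteq\Pol(R_{\bar U})$. Since $\Pol(R_{\bar U})$ is closed under taking minors (as noted after the definition of polymorphism) and is trivially a subalgebra in each arity because the target algebra $(B,\emptyset)$ has no operations, $\Pol(R_{\bar U})$ is itself a clonoid with target $(B,\emptyset)$. The inclusion $\langle F_U\rangle\subseteq\Pol(R_{\bar U})$ then follows from the fact that $\langle F_U\rangle$ is the smallest such clonoid containing $F_U$. I do not foresee a significant obstacle; the only thing requiring a little care is the double counting of ones, which separates the cases $k<n$ (forcing an all-zero column) and $k>n$ (forcing a column with at least two ones) uniformly.
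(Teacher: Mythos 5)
Your proof is correct and follows essentially the same approach as the paper: the same identity-matrix witness for $k=n$, and for $k\neq n$ the same counting of ones in the matrix of tuples from $P_n$ (the paper splits this into the cases $n<k$, where some row has two ones, and $n>k$, where some row is all zeros, whereas you unify both via a single double count), with part~(2) deduced exactly as the paper does from part~(1) and minor-closedness of $\Pol(R_{\bar U})$. No gaps.
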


\begin{proof}
For~\eqref{it:fkPl}, we see that $f_k$ does not preserve $(P_k,Q_k)$ since
\[\begin{array}{cccccc}
1 & 0 & \cdots & 0 & \xrightarrow{f_k} & 1\\
0 & 1 & \cdots & 0 & \xrightarrow{f_k} & 1\\
\vdots & \vdots &  & \vdots & \vdots & \vdots\\
0 & 0 & \cdots & 1 & \xrightarrow{f_k} & 1\\
\vin & \vin & \cdots & \vin &  & \rotatebox[origin=c]{-90}{$\not\in$}\\
P_k & P_k & \cdots & P_k &  & Q_k.
\end{array}\]
Next assume $n\neq k$ and $x_1,\dots, x_k\in P_n$.
Let $M$ be the $n\times k$ matrix where the $j$th column is $x_j$ for $1\le j\le k$.
If $n<k$, then at least one row of $M$ must have at least two entries equal to 1.
Thus at least one entry of $f_k(M)$, the $n$-tuple obtained by applying $f_k$ to the rows of $M$, is 0. 
Hence $f_k(M)$ is in $Q_n$. 
If $n> k$, then at least one row of $M$ is all zeros. 
So at least one entry of $f_k(M)$ is 0 and $f_k(M)$ is in $Q_n$.
This concludes the proof of~\eqref{it:fkPl}.

Item~\eqref{it:FU0} is immediate from~\eqref{it:fkPl}. 
\end{proof}

\begin{lem}\label{lem:idempotent}
Let $A$ be a finite set, $\B$ a finite idempotent algebra without a cube term, and $|A|,|B|>1$. Then the number of clonoids from $A$ to $\B$ is continuum. 
\end{lem}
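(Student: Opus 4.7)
The plan is to adapt the construction of Section~\ref{sec:nocube} so that the relation on $B$ used to distinguish the $f_k$'s becomes a subuniverse of the appropriate powers of $\B$. The only extra ingredient needed is the Markovi\'c--Mar\'oti--McKenzie characterization: a finite idempotent algebra has no cube term iff it admits a \emph{cube term blocker}. After replacing $\B$ by the subalgebra of $\B$ on which the blocker lives (which only decreases $|\CAB|$, and is thus harmless for a lower bound), we may assume there is a proper nonempty subuniverse $U\subsetneq B$ with the property that for every term operation $t$ of $\B$ of arity $m$, some coordinate $i\in[m]$ satisfies $t(b_1,\dots,b_m)\in U$ whenever $b_i\in U$.

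Fix such a $U$, pick $u\in U$ and $v\in B\setminus U$, and pick two distinct elements of $A$ which we call $0,1$. Keeping the relations $P_n\subseteq A^n$ from Section~\ref{sec:nocube}, I would define for each $n\in\N$
\[
Q_n':=B^n\setminus(B\setminus U)^n
\]
(the tuples in $B^n$ with at least one coordinate in $U$), and for each $k\in\N$
\[
f_k\colon A^k\to B,\qquad f_k(x):=\begin{cases}v&\text{if }x\in P_k,\\ u&\text{otherwise.}\end{cases}
\]
For $W\subseteq\N$ write $R_W':=\{(P_n,Q_n')\st n\in W\}$ and $F_W:=\{f_k\st k\in W\}$. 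The argument reduces to three facts:
\begin{enumerate}
\item[(i)] $Q_n'$ is a subuniverse of $\B^n$ for every $n\in\N$;
\item[(ii)] $f_k$ preserves $(P_n,Q_n')$ iff $k\ne n$;
\item[(iii)] $\langle F_W\rangle_\B\subseteq\Pol(R_{\bar W}')$ for every $W\subseteq\N$.
\end{enumerate}

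Item (i) is the only place the no-cube-term hypothesis is used: given $t$ of arity $m$ and $q_1,\dots,q_m\in Q_n'$, let $i$ be the coordinate furnished by the blocker; choose some $k$ with $(q_i)_k\in U$ (which exists since $q_i\in Q_n'$); then the $k$-th coordinate of the componentwise value $t(q_1,\dots,q_m)$ lies in $U$, so $t(q_1,\dots,q_m)\in Q_n'$. Item (ii) is handled exactly as in Lemma~\ref{lem:fkPl}\eqref{it:fkPl}: for $n\ne k$ the relevant $n\times k$ matrix over $\{0,1\}\subseteq A$ has a row with zero or at least two ones, so $f_k$ sends it to $u\in U$ and $f_k(M)\in Q_n'$; for $n=k$ the identity matrix witnesses $f_k(M)=(v,\dots,v)\in(B\setminus U)^k$, so $f_k(M)\notin Q_k'$. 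Item (iii) then follows: the generators $f_k$ with $k\in W$ lie in $\Pol(R_{\bar W}')$ by (ii), $\Pol(R_{\bar W}')$ is closed under minors by the standard polymorphism argument, and by (i) it is also closed under the operations of $\B$, so it is a clonoid containing $F_W$.

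To conclude, the assignment $W\mapsto\langle F_W\rangle_\B$ from $2^\N$ to $\CAB$ is injective: if $k\in W\setminus W'$, then $f_k\in\langle F_W\rangle_\B$, while (ii) together with (iii) force $f_k\notin\Pol(R_{\bar{W'}}')\supseteq\langle F_{W'}\rangle_\B$. This gives $2^{\aleph_0}$ many clonoids in $\CAB$, matching the trivial upper bound. The main obstacle is identifying the cube term blocker; once it is in hand, the rest of the proof is essentially the argument of Lemma~\ref{lem:fkPl} with $U$ in place of $\{0\}$ and $B\setminus U$ in place of $\{1\}$.
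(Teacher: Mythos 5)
Your proof is correct and follows essentially the same route as the paper: both invoke the Markovi\'c--Mar\'oti--McKenzie cube term blocker to obtain the subuniverses $B^n\setminus(B\setminus U)^n$, and both use the functions $f_k$ and relations $P_n$ to separate $\langle F_W\rangle_\B$ for distinct $W\subseteq\N$. The only cosmetic differences are that you phrase the key step as ``$\Pol(R'_{\bar W})$ is a clonoid containing $F_W$'' where the paper runs the same subuniverse computation as a direct contradiction, and you work with general $u\in U$, $v\notin U$ where the paper relabels so that $0\in V$ and $1\notin V$.
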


\begin{proof}
By \cite[Theorem 2.1]{MMM:FRCAACT} $\B$ must have cube term blocker.
That is, there exists a nonempty proper subset $V$ of $B$ such that 
\[T_n:=B^n\setminus (B\setminus V)^n\]
is a subuniverse of $\B$ for all $n$.
Without loss of generality, assume $0\in V$ and $1\in B\setminus V$. 
Thus $Q_n\subseteq T_n$.
The statement is immediate from the following claim:
\begin{equation}\label{eq:idFUFN}
\langle F_U\rangle_{\B}\cap F_{\N}= F_U\text{ for each }U\subseteq\N.
\end{equation}
The inclusion $\supseteq$ is clear. To prove the converse, let $U\subseteq\N\setminus\{1\}$ and $n\in\N$ such that $f_n\in\langle F_U\rangle_{\B}$. 
Then $f_n=\varphi(f_{k_1}^{\sigma_1},\dots, f_{k_m}^{\sigma_m})$ for some $m$-ary $\varphi$ in the clone of $\B$, $k_1,\dots,k_m\in U$, and maps $\sigma_i\colon [k_i]\rightarrow [n]$ for $1\le i\le m$.
If $n=k_i$ for some $i$, then $f_n\in F_U$. 

Assume toward a contradiction that $n\ne k_i$ for any $i$.
Let $a_1,\dots, a_n$ enumerate $P_n$. 
By Lemma~\ref{lem:fkPl},
\[f_{k_i}^{\sigma_i}(a_1,\dots, a_n)=:b_i\in Q_n\]
for each $1\le i\le m$.
Therefore we have
\begin{align*}
    f_n(a_1,\dots, a_n)
    &=\varphi(f_{k_1}^{\sigma_1},\dots, f_{k_m}^{\sigma_m})(a_1,\dots, a_n)\\
    &=\varphi(b_1,\dots, b_m)\\
    &\in T_n \text{ since $b_1,\dots,b_m\in T_n$ and $\varphi$ preserves $T_n$}.
\end{align*}
However $f_n(a_1,\dots, a_n)=(1,\dots, 1)\not\in T_n$. 
This contradiction completes the proof of~\eqref{eq:idFUFN}.
\end{proof}

Theorem~\ref{thm:main2} follows immediately from Theorem~\ref{thm:cube} and Lemma~\ref{lem:idempotent}.
Next we show that Lemma~\ref{lem:idempotent} generalizes to nonidempotent Boolean algebras.

By Post's classification of Boolean clones, each clone on $\{0,1\}$ without an NU-term or a Mal'cev term is contained in a nonidempotent clone generated by one of the following sets of operations:
\begin{enumerate}
\item $\{\wedge,\0,\1\}$ or $\{\vee,\0,\1\}$,
\item $\{\neg,\0\}$,   
\item $\{\rightarrow\}$ or $\{\not\rightarrow\}$.
\end{enumerate}
Thus there are 3 cases up to duality. 
We will show that for each case there are continuum many clonoids with source $A$ and corresponding target algebra $\B$ by variations of the proof of Lemma~\ref{lem:idempotent}.
From this it follows that for algebras with smaller clone of term operations (e.g., the set $(\{0,1\},\emptyset)$), there are continuum many clonoids as well.
Note that the maximal clones without a cube term on sets of size at least 3 are not explicitly know. 
Hence we do not know whether Lemma~\ref{lem:idempotent} generalizes to arbitrary nonidempotent algebras.

We begin proving the 3 cases with the case where $\B=(\{0,1\}, \land,\0,\1)$.
\begin{lem}\label{lem:and_case}
The number of clonoids with finite source $A$ and target algebra $\B=(\{0,1\}, \land,\0,\1)$ is continuum. 
\end{lem}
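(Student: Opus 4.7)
The plan is to follow Lemma~\ref{lem:idempotent} as closely as possible, reusing the same functions $f_k$ and relational pairs $(P_n, Q_n)$, and proving
\[ \langle F_U \rangle_\B \cap F_\N = F_U \qquad \text{for each } U \subseteq \N \setminus \{1\}, \]
which yields continuum many distinct clonoids $\langle F_U \rangle_\B$. Since $\B = (\{0,1\}, \land, \0, \1)$ is not idempotent, the cube-term-blocker $T_n$ is unavailable, so I would replace it by a direct normal-form analysis of the term operations of $\B$.

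The key observation is that every term operation of $\B$ is equivalent to one of the constant $0$, the constant $1$, or $\bigwedge_{i \in S} x_i$ for some nonempty $S \subseteq [m]$; this follows by a routine induction from $0 \land x = 0$, $1 \land x = x$, and the commutativity, associativity, and idempotence of $\land$. Given $f_n \in \langle F_U \rangle_\B$ with $U \subseteq \N \setminus \{1\}$, write $f_n = \varphi(f_{k_1}^{\sigma_1}, \dots, f_{k_m}^{\sigma_m})$ with $k_1, \dots, k_m \in U$ and some term $\varphi$ of $\B$. Both constant cases for $\varphi$ contradict the fact that $f_n$ takes both values $0$ and $1$, so I may assume $\varphi = \bigwedge_{i \in S} x_i$ for some nonempty $S$.

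Assuming toward a contradiction that $n \notin U$, I would then run the argument of Lemma~\ref{lem:idempotent} almost verbatim: enumerate $P_n = \{a_1, \dots, a_n\}$, and by Lemma~\ref{lem:fkPl}\eqref{it:fkPl} each $b_i := f_{k_i}^{\sigma_i}(a_1, \dots, a_n)$ lies in $Q_n$. The role played there by the subuniverse $T_n$ is taken over here by the elementary fact that $Q_n$ itself is closed under componentwise $\land$: if each $b_i$ has a coordinate equal to $0$, so does $\bigwedge_{i \in S} b_i$. Hence $f_n(a_1, \dots, a_n) = \bigwedge_{i \in S} b_i \in Q_n$, contradicting $f_n(a_1, \dots, a_n) = (1, \dots, 1) \notin Q_n$. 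The only genuinely new piece of work is the normal form for terms of $\B$; once the constant cases are peeled off and the closure of $Q_n$ under $\land$ is observed, the remainder is a mechanical adaptation of the idempotent argument.
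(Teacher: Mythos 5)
Your proof is correct, but it takes a different route from the paper. The paper's proof is a two-line reduction: writing $\B'=(\{0,1\},\land)$, it observes that the clone of $\B$ is the clone of $\B'$ together with the constants, so $\langle F_U\rangle_\B=\langle F_U\rangle_{\B'}\cup\{\0,\1\}$; since $\B'$ is idempotent without a cube term, Lemma~\ref{lem:idempotent} already gives continuum many distinct clonoids $\langle F_U\rangle_{\B'}$, and adjoining the constant functions (which are not in $F_\N$) cannot collapse any of them. You instead reprove the separation $\langle F_U\rangle_\B\cap F_\N=F_U$ from scratch, replacing the cube term blocker by an explicit normal form for the terms of $\B$ (constants or nonempty meets $\bigwedge_{i\in S}x_i$) and by the observation that $Q_n$ is closed under componentwise $\land$. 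That observation is exactly the cube term blocker $T_n=B^n\setminus(B\setminus V)^n$ with $V=\{0\}$ specialized to this algebra, so the two arguments rest on the same underlying fact; the difference is that you handle the non-idempotent part (the constants) at the level of terms, by peeling off the constant cases using that $f_n$ is non-constant, whereas the paper handles it at the level of generated clonoids. Your version is more self-contained and does not invoke Lemma~\ref{lem:idempotent} or the cube-term-blocker theorem of Markovi\'c--Mar\'oti--McKenzie, at the cost of an extra (routine) normal-form induction; the paper's version is shorter and makes the structural point that only the idempotent reduct matters.
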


\begin{proof}
Let $\B=(\{0,1\},\land,\0,\1)$ and $\B'=(\{0,1\},\land)$. 
Note that the clone of $\B$ is the clone of $\B'$ with the addition of the constant maps $\0,\1$. 
Hence for any subset $U\subseteq \N$, we have $\langle F_U\rangle_\B=\langle F_U\rangle_{\B'}\cup\{\0,\1\}$.
By Lemma~\ref{lem:idempotent} there are continuum many Boolean clonoids of the form $\langle F_U\rangle_{\B'}$. 
\end{proof}

Now we prove the case where $\B=(\{0,1\},\neg,\0)$.

\begin{lem}\label{lem:neg_case}
The number of clonoids with finite source $A$ and target algebra $\B=(\{0,1\},\neg,\0)$ is continuum. 
\end{lem}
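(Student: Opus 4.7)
The plan is to mimic the proof of Lemma~\ref{lem:idempotent}, working again with the family $F_U := \{f_k : k \in U\}$ for $U \subseteq \N$, and to establish the analogue
\[ \langle F_U \rangle_{\B} \cap F_{\N} = F_U. \]
Since $\N$ has continuum many subsets, this yields continuum many distinct clonoids with source $A$ and target $\B$.

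The key structural simplification is that the clone of $\B = (\{0,1\}, \neg, \0)$ is very small: its $m$-ary term operations are exactly the projections, the negated projections, and the two constants $\0, \1$. Consequently, every member of $\langle F_U \rangle_{\B}$ is either a minor $f_k^\sigma$ of some $f_k$ with $k \in U$, a negated minor $\neg f_k^\sigma$, or one of the constants $\0, \1$. It therefore suffices, given $n \notin U$, to rule out each of these four possibilities for $f_n$.

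The constants are disposed of by direct evaluation: $f_n(1,0,\ldots,0)=1 \ne 0$ rules out $f_n = \0$, and $f_n(0,\ldots,0)=0 \ne 1$ rules out $f_n = \1$. The case $f_n = f_k^\sigma$ with $k \in U$ is handled exactly as in Lemma~\ref{lem:idempotent}: by Lemma~\ref{lem:fkPl}\eqref{it:FU0}, $f_k$ and hence $f_k^\sigma$ preserves $(P_n,Q_n)$, so applied component-wise to an enumeration of $P_n$ it outputs a tuple in $Q_n$, whereas $f_n$ outputs $(1,\ldots,1) \notin Q_n$.

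The main obstacle, absent in the idempotent case, is that the constant $\1 = \neg\0$ lies in the clone of $\B$ and fails to preserve $(P_n,Q_n)$; consequently Lemma~\ref{lem:fkPl}\eqref{it:FU0} does not extend verbatim, and preservation arguments alone cannot dispose of the negated-minor case $f_n = \neg f_k^\sigma$. This is resolved by a direct evaluation at the all-zero tuple: since $(0,\ldots,0) \notin P_k$ we have $f_k^\sigma(0,\ldots,0) = f_k(0,\ldots,0) = 0$, hence $\neg f_k^\sigma(0,\ldots,0) = 1$, while $f_n(0,\ldots,0) = 0$. This completes the case distinction and thus the proof.
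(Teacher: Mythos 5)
Your proof is correct and follows essentially the same route as the paper: enumerate the term operations of $(\{0,1\},\neg,\0)$ (projections, negated projections, constants), reduce to the claim $\langle F_U\rangle_\B\cap F_\N=F_U$, and dispose of the minor and negated-minor cases separately via Lemma~\ref{lem:fkPl}. The only difference is that you rule out $f_n=\neg f_k^\sigma$ by evaluating at the all-zero tuple rather than at a constant tuple drawn from $P_m$; this is a slight simplification that also removes the paper's need to restrict to $U\subseteq\N\setminus\{1\}$.
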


\begin{proof}
As in the proof of Lemma~\ref{lem:idempotent}, the statement is immediate from the following claim:
\begin{equation} \label{eq:FUFN} \langle F_U\rangle_{\B}\cap F_\N= F_U \text{ for each } U\subseteq\N\setminus\{1\}.
\end{equation}
The inclusion $\supseteq$ is clear. To prove the converse, let $U\subseteq\N\setminus\{1\}$ and $\ell\in\N$ such that $f_\ell\in\langle F_U\rangle_{\B}$. 
Then $f_\ell=f^\sigma_k$ or $f_\ell=\neg (f^\sigma_k)$ for some $k\in U$ and map $\sigma\colon [k]\rightarrow[\ell]$. 
In the former case, Lemma~\ref{lem:fkPl} yields $\ell = k$ and further $\ell\in U$.
To see that the latter cannot occur, let $m\in\N, m\neq\ell,$ and let $a=(1,0,\dots,0)\in P_m$. 
We have 
\begin{align*}
    f_\ell(a,\dots, a)
    &=\neg(f_k^\sigma)(\underbrace{a,\dots, a}_{\ell})\\
    &=\neg f_k(\underbrace{a,\dots, a}_{k})\\
    &=(\underbrace{1,\dots,1}_m) \text{ since $k\in U$, so $k> 1$}\\
    &\not\in Q_m.
\end{align*}
 Thus $f_\ell$ does not preserve $(P_m,Q_m)$. This contradicts Lemma~\ref{lem:fkPl} and completes the proof of~\eqref{eq:FUFN}.
\end{proof}

The final case, where $\B=(\{0,1\}, \not\rightarrow)$, is given in the following lemma. 

\begin{lem}\label{lem:notimplies_case} 
The number of clonoids with finite source $A$ and target algebra $\B=(\{0,1\}, \not\rightarrow)$ is continuum. 
\end{lem}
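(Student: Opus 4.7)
The plan is to follow the template of Lemma~\ref{lem:idempotent} and Lemma~\ref{lem:neg_case}: I would prove that $\langle F_U\rangle_{\B}\cap F_{\N}=F_U$ for every $U\subseteq\N$, from which $\{\langle F_U\rangle_{\B}:U\subseteq\N\}$ is an uncountable family of pairwise distinct clonoids (any $k\in U\setminus U'$ yields an $f_k$ that lies in $\langle F_U\rangle_{\B}$ but not in $\langle F_{U'}\rangle_{\B}$), supplying continuum many elements of $\CAB$. As in the earlier cases, only the inclusion $\langle F_U\rangle_{\B}\cap F_{\N}\subseteq F_U$ requires argument.

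The key structural observation about $\not\rightarrow$ (where $x\not\rightarrow y=x\wedge\neg y$) is that every term $\varphi(y_1,\dots,y_m)$ built from $\not\rightarrow$ satisfies $\varphi(y_1,\dots,y_m)\le y_{\lambda(\varphi)}$ pointwise on $\{0,1\}$, where $\lambda(\varphi)\in[m]$ is the index of the leftmost leaf in the syntax tree of $\varphi$ (the variable reached by always descending into the left argument of $\not\rightarrow$). This is established by induction on term structure: the variable case is immediate, and for $\varphi=\varphi_1\not\rightarrow\varphi_2=\varphi_1\wedge\neg\varphi_2$ the induction hypothesis gives $\varphi\le\varphi_1\le y_{\lambda(\varphi_1)}=y_{\lambda(\varphi)}$.

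With this in hand, the main step proceeds as follows. Assume $f_\ell\in\langle F_U\rangle_{\B}$, so $f_\ell=\varphi(f_{k_1}^{\sigma_1},\dots,f_{k_m}^{\sigma_m})$ for some term $\varphi$ over $\not\rightarrow$, indices $k_1,\dots,k_m\in U$, and maps $\sigma_i\colon[k_i]\to[\ell]$. Setting $j:=\lambda(\varphi)$, the observation yields $f_\ell\le f_{k_j}^{\sigma_j}$ pointwise on $A^\ell$, and therefore $P_\ell=\mathrm{supp}(f_\ell)\subseteq\mathrm{supp}(f_{k_j}^{\sigma_j})$. For each standard basis vector $e_i\in P_\ell$ the tuple $((e_i)_{\sigma_j(1)},\dots,(e_i)_{\sigma_j(k_j)})$ has exactly $|\sigma_j^{-1}(i)|$ entries equal to~$1$, and its membership in $P_{k_j}$ forces this count to be~$1$. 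Hence $|\sigma_j^{-1}(i)|=1$ for every $i\in[\ell]$, so $\sigma_j$ is a bijection $[k_j]\to[\ell]$; in particular $k_j=\ell\in U$ and $f_\ell\in F_U$.

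I do not foresee a serious obstacle. The one mildly delicate point is that $\varphi$ might collapse to the constant $\0$ (which does lie in the clone of $\B$ as $x\not\rightarrow x$); however, in that case $f_\ell\equiv 0\notin F_{\N}$, so the possibility is vacuous. Compared to Lemma~\ref{lem:neg_case}, the $\not\rightarrow$-proof is actually cleaner: because every term over $\not\rightarrow$ is dominated pointwise by some variable, a single support-counting argument replaces the $f^\sigma_k$ versus $\neg f^\sigma_k$ case split from the $\neg$-case.
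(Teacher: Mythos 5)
Your proof is correct, and it reaches the conclusion by a somewhat different route than the paper. The paper does not prove the trace identity $\langle F_U\rangle_\B\cap F_\N=F_U$ here at all; instead it shows $\langle F_U\rangle_\B\subseteq\Pol(R_{\bar U})$ by verifying that the set of polymorphisms of $(P_n,Q_n)$ is closed under $\not\rightarrow$ (if $g$ preserves $(P_n,Q_n)$ then $g(a_1,\dots,a_k)$ has a zero entry, hence so does $g\wedge\neg h$, for \emph{any} $h$), and then separates $\langle F_U\rangle_\B$ from $\langle F_V\rangle_\B$ using Lemma~\ref{lem:fkPl}\eqref{it:fkPl}. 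Both arguments ultimately rest on the single inequality $u\not\rightarrow v\le u$; you package it as a syntactic normal-form fact (every $\not\rightarrow$-term is dominated pointwise by its leftmost variable, proved by induction on terms), while the paper packages it as a one-step closure property of $\Pol(P_n,Q_n)$, avoiding any induction over term trees. Your support-counting step showing $\sigma_j$ must be a bijection is in effect a re-derivation of the $n=k$ case of Lemma~\ref{lem:fkPl}\eqref{it:fkPl}, so you could shorten your argument by citing that lemma once you know $f_{k_j}^{\sigma_j}$ is identically $1$ on $P_\ell$. What your version buys is uniformity with Lemmas~\ref{lem:idempotent} and~\ref{lem:neg_case} (the same trace identity in all three cases) and no explicit appeal to the invariant relations; what the paper's version buys is brevity and reuse of the relational machinery already set up for $R_{\bar U}$. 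Your handling of the constant term $x\not\rightarrow x=\0$ is right but, as you note, not even needed, since the domination bound $\varphi\le y_{\lambda(\varphi)}$ holds for that term as well.
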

\begin{proof}
First we show
\begin{equation}\label{eq:FUd}
 \langle F_U\rangle_\B\subseteq\Pol(R_{\bar U}) \text{ for each } U\subseteq\N.
\end{equation}
By Lemma~\ref{lem:fkPl}, $\langle F_U\rangle\subseteq\Pol(R_{\bar U})$.
Assume $g,h\in\langle F_U\rangle_\B$ of arity $k$ preserve $(P_n,Q_n)$ and let $a_1,\dots,a_k\in P_n$. 
Let $d := g \not\rightarrow h$. 
Then we have 
\[d(a_1,\dots,a_k) = g(a_1,\dots, a_k) \wedge (\neg h(a_1,\dots,a_k)).\]
Since $g$ preserves $(P_n,Q_n)$, there must be at least one zero entry in $g(a_1,\dots, a_k)$.
Thus $d(a_1,\dots, a_k)\in Q_n$. 
Hence~\eqref{eq:FUd} is proved.

Let $U,V\subseteq\N$ such that $U\ne V$.
We claim that
\begin{equation}
 \langle F_U\rangle_\B\neq\langle F_V\rangle_\B. 
\end{equation}
Without loss of generality, assume there exists $n\in U\setminus V$.
From~\eqref{eq:FUd}, we have that $\langle F_V\rangle_\B$ preserves $(P_n,Q_n)$.
Since $n\in U$, we have $f_n\in F_U$ and thus $\langle F_U\rangle_\B$ does not preserve $(P_n,Q_n)$ by Lemma~\ref{lem:fkPl}.
Therefore $\langle F_U\rangle_\B\ne \langle F_V\rangle_\B$.
\end{proof}

\section{Proof of Main Theorem}\label{sec:proof}

In this section, we combine the results from the previous sections to give a proof of Theorem~\ref{thm:main}. 

\begin{proof}[Proof of Theorem ~\ref{thm:main}]
The reverse direction of \eqref{it:NU} follows immediately from Theorem~\ref{thm:NU}. 

To prove the reverse direction of \eqref{it:Malcev}, assume $\B$ has a Mal'cev term but no majority term. 
Then by Theorem~\ref{thm:cube}, $\CAB$ is at most countably infinite.
Since $\B$ has no majority term, by Post's classification, the clone of $\B$ is contained in the clone of $\B':=(\{0,1\},+,\0,\1)$.
In Lemma~\ref{lem:ek} we show that there are infinitely many clonoids in $\C{A}{\B'}$.
Since $\C{A}{\B'}\subseteq\CAB$, there are countably many clonoids in $\CAB$. 

Now assume $\B$ has neither an NU-term nor a Mal'cev term. 
As mentioned in the beginning of Section~\ref{sec:nocube}, it follows from Lemmas~\ref{lem:and_case}, \ref{lem:neg_case}, and \ref{lem:notimplies_case} and their duals that there are continuum many clonoids with target $\B$.
This proves the reverse direction of \eqref{it:set}.

The forward directions of \eqref{it:NU}, \eqref{it:Malcev}, and \eqref{it:set} follow because the cases are mutually exclusive and cover all possibilities.
\end{proof}

\section*{Acknowledgments}
The author thanks Peter Mayr for discussions on the material in this paper and the anonymous referee for their comments and asking a question that led to Theorem 1.4.

\end{document}